\newtheorem{defi}{Definition}[section]
\newtheorem{teo}{Theorem}[section]
\newtheorem{pro}[teo]{Proposition}
\newtheorem{lem}[teo]{Lemma}
\newtheorem{ex}{Example}[section]
\newcommand{\er}{\mathbb{R}}
\newcommand{\dst}{\displaystyle}
\newcommand{\nn}{\nonumber}
\newcommand{\nid}{\noindent }
\newcommand{\mb}{\mathbf }
\newcommand{\ea}{\emph{et al.}}
\newcommand{\es}{\left}
\newcommand{\di}{\right}
\begin{document}

\title{Multidimensional extremal dependence coefficients}
\author{Helena Ferreira}
\affil{Universidade da Beira Interior, Centro de Matem\'{a}tica e Aplica\c{c}\~oes (CMA-UBI), Avenida Marqu\^es d'Avila e Bolama, 6200-001 Covilh\~a, Portugal\\ \texttt{helena.ferreira@ubi.pt}}

\author{Marta Ferreira}
\affil{Center of Mathematics of Minho University\\ Center for Computational and Stochastic Mathematics of University of Lisbon \\
Center of Statistics and Applications of University of Lisbon, Portugal\\ \texttt{msferreira@math.uminho.pt} }

\date{}

\maketitle

\abstract{Extreme values modeling has attracting the attention of researchers in diverse areas such as the environment, engineering, or finance. Multivariate extreme value distributions are particularly suitable to model the tails of multidimensional phenomena. The analysis of the dependence among multivariate maxima is useful to evaluate risk. Here we present new multivariate extreme value models, as well as, coefficients to assess multivariate extremal dependence.}\\

\nid\textbf{keywords:} {multivariate extreme value models, tail dependence, extremal coefficients, random fields}\\

\nid\textbf{AMS 2000 Subject Classification}: 60G70\\

\section{Introduction}\label{sint}
Let $\mb{X}=\{X(\mb{x}),\mb{x}\in\er^m\}$ be a random field. For a fixed set of locations $L=\{\mb{x}_1,\hdots,\, \,\mb{x}_d\}\subset\er^m$ and some partition $L_1=\{\mb{x}_1,\hdots,\, \mb{x}_{i_1}\}$,
$L_2=\{\mb{x}_{i_1+1},\hdots,\, \mb{x}_{i_2}\}$, \ldots, $L_p=\{\mb{x}_{i_{p-1}+1},\hdots,\, \mb{x}_{d}\}$, with $1\leq p\leq d$, consider the random vectors $\mb{X}_{L_1}=(X(\mb{x}_1),\hdots,\, X(\mb{x}_{i_1}))$, \ldots, $\mb{X}_{L_p}=(X(\mb{x}_{i_{p-1}+1}),\hdots,\, X(\mb{x}_{d}))$. We are going to evaluate the dependence between the vectors through coefficients, that is, the dependence between the marginals of $\mb{X}$ over disjoint regions $L_1,\hdots,\,  L_p$. Examples of applications within this context can be found in Naveau \emph{et al.} (\cite{nav+09} 2009) and Guillou \emph{et al.} (\cite{gui+14} 2014) for $d=p=2$, i.e.,  two locations, in Fonseca \emph{et al.} (\cite{fon+15} 2015) for $d>2$ and $p=2$, i.e., two group of several locations and Ferreira and Pereira (\cite{fer+per15} 2015) for $d=p>2$, i.e., several isolated locations.

In the applications, in order to study the dependence between sub-vectors of $\mb{X}$ we can form an auxiliary vector $(Y_1, \hdots, Y_p)$ where each variable $Y_i$ somehow summarizes the information of $\mb{X}_{L_i}$, $i = 1, \hdots, p$, and study the dependence between the variables $Y_i$. This is the approach followed by some authors (Naveau \emph{et al.} \cite{nav+09} 2009; Marcon \ea~\cite{marc+16} 2016).  In our proposal to infer the dependence between clusters of variables, we deal directly with the vectors $\mb{X}_{L_i}$, $i = 1, \hdots, p$. On the other hand, if the random field is vectorial, that is, for each location $\mb{x}_i$, $X(\mb{x}_i)$ is a vector $(X ^ 1 (\mb{x}_i), \hdots, X ^ s (\mb{x}_i))$, whenever we think of the dependence between $X(\mb{x}_1)$, \ldots, $X(\mb{x}_d)$ we have dependency between vectors.

The dependence between the random vectors $\mb{X}_{L_1}$, $\mb{X}_{L_2}$, \ldots, $\mb{X}_{L_p}$ can be characterized through the exponent measure
\begin{eqnarray}\nn
\ell_{\mb{x}_1,\hdots,\, \mb{x}_d}(t_1,\hdots,\, t_d)=-\ln  F_{(X(\mb{x}_1),\hdots,\, X(\mb{x}_{d}))}(t_1,\hdots,\, t_d),
\end{eqnarray}
where $F_{(X(\mb{x}_1),\hdots,\, X(\mb{x}_{d}))}$ denotes the distribution function (df) of $\mb{X}_{L}=(X(\mb{x}_1),\hdots,\, X(\mb{x}_{d}))$. If $\mb{X}$ is a max-stable random field with unit Fr\'echet marginals, then $\ell_{\mb{x}_1,\hdots,\, \mb{x}_d}$ is homogeneous of order $-1$ and the polar transformation used in the Pickands representation allows us to see it as a moment-based tail dependence tool (see, e.g., Finkenst\"{a}dt and Rootzén \cite{fin+roo03} 2003 or Beirlant \ea~\cite{beirl+04} 2004).\\

Our proposal also addresses $\ell_{\mb{x}_1,\hdots,\, \mb{x}_d}$ as a function of moments of transformations of $\mb{X}_{L}$. Specifically, the moments
\begin{eqnarray}\nn
e(\lambda_1,\hdots,\, \lambda_p)=E\left(\bigvee_{j=1}^p\bigvee_{x_i\in L_j}F_{X(\mb{x}_i)}^{\lambda_j}(X(\mb{x}_i))\right),\,(\lambda_1,\hdots,\, \lambda_p)\in(0,\infty)^p\,,
\end{eqnarray}
where $a\vee b=\max(a,b)$. If $p=d=2$, $\frac{1}{2}e(\lambda,1-\lambda)$ equals the $\lambda$-madogram of Naveau \emph{et al.} (\cite{nav+09} 2009), unless the addition of constant $\frac{1}{2}(E(U^{\lambda})+E(U^{1-\lambda}))$ where $U$ is standard uniform. When  $p=d\geq 2$, $e(\lambda_1^{-1},\hdots,\, \lambda_d^{-1})$ with $\sum_{j=1}^d\lambda_j=1$ equals the generalized madogram considered in Marcon \ea~(\cite{marc+16} 2016), unless the addition of constant $\frac{1}{d}\sum_{j=1}^dE\left(U^{\lambda_j^{-1}}\right)$.

Here we  also consider a shifted $e(\lambda_1,\hdots,\, \lambda_p)$ by subtracting the constant
$$
\frac{1}{p}\sum_{i=1}^pE\left(\bigvee_{x_i\in L_j}F_{X(\mb{x}_i)}^{\lambda_j}(X(\mb{x}_i))\right).
$$

The referred works consider max-stable random fields with standard Fr\'echet marginals, except Guillou \ea~(\cite{gui+14} 2014) where $\ell_{x_1,x_2}(t_1,t_2)$ is homogeneous of order $-1/\eta$ and $F_{X(x_i)}(t)=P(X(x_i)\leq t)=\exp(-\sigma(x_i)t^{-1/\eta})$, $i=1,2$, $\eta\in(0,1]$, corresponding to the bivariate extreme values model obtained in Ramos and Ledford (\cite{ram+led11} 2011). \\

We will also consider that $F_{(X(\mb{x}_1),\hdots,\, X(\mb{x}_{d}))}$ is such that $\ell_{\mb{x}_1,\hdots,\, \mb{x}_d}(t_1,\hdots,\, t_d)$ is homogeneous of order $-1/\eta$ and $F_{X(x)}(t)=P(X(x)\leq t)=\exp(-\sigma(x)t^{-1/\eta})$ for some constants $\sigma(x)>0$ and $\eta\in(0,1]$. Under this hypothesis, which includes all the other mentioned works whenever $\eta=1$ and $\sigma(x)=1$, we define extremal dependence functions that provide us coefficients to measure the dependence among $\mb{X}_{L_1}$, \ldots, $\mb{X}_{L_p}$ through the dependence between $M(L_j)$, $j=1,\hdots,\, p$
and
relate the extremal coefficients with the upper tail dependence function introduced in Ferreira and Ferreira (\cite{fer+fer12b} 2012) (Section \ref{scoef}). We  compute the extremal coefficients for several choices of $F_{(X(\mb{x}_1),\hdots,\, X(\mb{x}_{d}))}$ in Section \ref{sex}. Finally we consider an asymptotic tail independence coefficient to measure an ``almost" independence for a class of models wider than max-stable ones (Section \ref{sati}).

In order to simplify notations, we will write $X_i$ instead of $X(\mb{x}_i)$ and, for any vector $\mb{a}$ and any subset of its indexes $S$, we will write $\mb{a}_S$ to denote the sub-vector of $\mb{a}$ with indexes in $S$.

\section{Model and coefficients of multivariate extremal dependence}\label{scoef}

Let $I=\{1,\hdots,\, d\}$ and $I_1=\{\alpha(I_1)=1,\hdots,\, \omega(I_1)\}$, $I_2=\{\alpha(I_2)=\omega(I_1)+1,\hdots,\, \omega(I_2)\}$, \ldots, $I_p=\{\alpha(I_p)=\omega(I_{p-1})+1,\hdots,\, \omega(I_p)=d\}$ be a partition of $I$, $1\leq p\leq d$. Consider $\mb{X}_I=(X_1,\hdots,\, X_d)$ has df $F_{_{\mb{X}_I}}$ and univariate marginals $F_{i}$ such that
\begin{enumerate}
\item[(i)] $F_{i}(t)=\exp\left(-\sigma_i t^{-1/\eta}\right)$, $i=1,\hdots,\, d$
\item[(ii)] $\ell_{_{\mb{X}_I}}(t_1,\hdots,\, t_d)=-\ln  F_{_{\mb{X}_I}}(t_1,\hdots,\, t_d)$ is homogeneous of order $-1/\eta$,
\end{enumerate}
for some constants $\sigma_i>0$ and $\eta\in(0,1]$. Thus, the copula $C_{_{\mb{X}_I}}$ of $F_{_{\mb{X}_I}}$ is max-stable, i.e.
\begin{eqnarray}
C_{_{\mb{X}_I}}(u_1^s,\hdots,\, u_d^s)=C_{_{\mb{X}_I}}^s(u_1,\hdots,\, u_d),\,s>0.
\end{eqnarray}

In the following we use notation $M(I)=\bigvee_{i\in I}F_i(X_i)$.

\begin{lem}\label{lem1}
If $\mb{X}_I=(X_1,\hdots,\, X_d)$ satisfies conditions (i) and (ii) then, for all $(u_1,\hdots,\, u_p)\in (0,1)^p$,
\begin{eqnarray}\nn
P(M(I_1)\leq u_1,\hdots,\, M(I_p)\leq u_p)=
\exp\left\{-\ell_{_{\mb{X}_I}}\left(\sum_{j=1}^p\left(-\frac{\sigma_1}{\ln u_j}\right)^{\eta}\delta_1(I_j),\hdots,\, \sum_{j=1}^p\left(-\frac{\sigma_d}{\ln u_j}\right)^{\eta}\delta_d(I_j)\right)\right\}.
\end{eqnarray}

\end{lem}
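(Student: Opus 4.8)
The plan is to reduce the joint event $\{M(I_1)\le u_1,\dots,M(I_p)\le u_p\}$ to a single ``box'' event for $\mb{X}_I$ and then read off the answer from the definition of $\ell_{\mb{X}_I}$. First I would fix the meaning of the symbol $\delta_i(I_j)$ as the indicator of $\{i\in I_j\}$. Since $I_1,\dots,I_p$ is a partition of $I$, each index $i$ lies in exactly one block, so for fixed $i$ precisely one of the terms $\delta_i(I_1),\dots,\delta_i(I_p)$ equals $1$; consequently the inner sum $\sum_{j=1}^p(-\sigma_i/\ln u_j)^\eta\,\delta_i(I_j)$ collapses to the single value $(-\sigma_i/\ln u_{j(i)})^\eta$, where $j(i)$ is the block containing $i$. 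This observation is what lets the right-hand side be written compactly as one evaluation of $\ell_{\mb{X}_I}$.

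Next I would invert the marginals. By hypothesis (i) each $F_i(t)=\exp(-\sigma_i t^{-1/\eta})$ is continuous and strictly increasing on $(0,\infty)$, hence a bijection onto $(0,1)$, with inverse $F_i^{-1}(u)=(-\sigma_i/\ln u)^\eta$ for $u\in(0,1)$ (the quantity $-\sigma_i/\ln u$ is positive because $\ln u<0$). Using that $F_i$ is increasing, the defining event rewrites as
$$\{M(I_j)\le u_j\}=\Big\{\bigvee_{i\in I_j}F_i(X_i)\le u_j\Big\}=\bigcap_{i\in I_j}\{F_i(X_i)\le u_j\}=\bigcap_{i\in I_j}\{X_i\le F_i^{-1}(u_j)\}.$$
Intersecting over $j=1,\dots,p$ and using the partition, the whole event becomes $\bigcap_{i=1}^d\{X_i\le t_i\}$ with $t_i:=F_i^{-1}(u_{j(i)})=(-\sigma_i/\ln u_{j(i)})^\eta$, which by the previous paragraph equals $\sum_{j=1}^p(-\sigma_i/\ln u_j)^\eta\,\delta_i(I_j)$.

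Finally I would identify the probability of this box event with $F_{\mb{X}_I}(t_1,\dots,t_d)$ and invoke the definition $F_{\mb{X}_I}=\exp(-\ell_{\mb{X}_I})$; substituting the explicit $t_i$ gives exactly the stated formula. I do not expect a serious obstacle here: the argument is a probability-integral-transform computation, and the only points needing care are the strict monotonicity and domain of $F_i$ (so that the inversion and the sign of $\ln u_j$ are legitimate) and the bookkeeping that turns the per-block quantiles into the single indexed sum. It is worth remarking that the homogeneity of order $-1/\eta$ in (ii) is not actually used in this lemma — only the exponential representation $F_{\mb{X}_I}=\exp(-\ell_{\mb{X}_I})$ together with the marginal form (i) is needed; the homogeneity will presumably enter later when the extremal coefficients are computed.
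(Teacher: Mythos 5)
Your proposal is correct and follows essentially the same route as the paper: the paper writes the event $\{M(I_1)\leq u_1,\hdots,M(I_p)\leq u_p\}$ as the copula $C_{_{\mb{X}_I}}$ evaluated at the coordinates $\sum_{j=1}^p u_j\delta_i(I_j)$ and then substitutes $F_i^{-1}(u)=(-\sigma_i/\ln u)^{\eta}$, which is exactly your probability-integral-transform computation phrased through the copula rather than directly through $F_{_{\mb{X}_I}}$. Your added observations (that the block sums collapse to a single term and that the homogeneity in (ii) is not actually needed here) are accurate.
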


\begin{proof}
We have successively
\begin{eqnarray}\nn
\begin{array}{rl}
&\dst P(M(I_1)\leq u_1,\hdots,\, M(I_p)\leq u_p)\\\\
=&\dst C_{_{\mb{X}_I}}\left(\sum_{j=1}^pu_j\delta_1(I_j),\hdots,\, \sum_{j=1}^pu_j\delta_d(I_j)\right)\\\\
=& \dst
\exp\left\{-\ell_{_{\mb{X}_I}}
\left(F_{1}^{-1}\left(\sum_{j=1}^pu_j\delta_1(I_j)\right),\hdots,\,
F_{d}^{-1}\left(\sum_{j=1}^pu_j\delta_d(I_j)\right)\right)\right\}.
\end{array}
\end{eqnarray}
\end{proof}

Analogously, we obtain, for $1\leq j<j'\leq p$,
\begin{eqnarray}\nn
\begin{array}{rl}
&\dst P(M(I_j)\leq u_j,M(I_{j'})\leq u_{j'})\\\\
=& \dst
\exp\left\{-\ell_{_{\mb{X}_{I_j\cup I_{j'}}}}\left(\sum_{i\in\{j,j'\}}\left(-\frac{\sigma_{\alpha(I_j\cup I_{j'})}}{\ln u_i}\right)^{\eta}\delta_{\alpha(I_j\cup I_{j'})}(I_i),\hdots,\, \sum_{i\in\{j,j'\}}\left(-\frac{\sigma_{\omega(I_j\cup I_{j'})}}{\ln u_i}\right)^{\eta}\delta_{\omega(I_j\cup I_{j'})}(I_i)\right)\right\},
\end{array}
\end{eqnarray}
where $\alpha(I_j\cup I_{j'})$ and $\omega(I_j\cup I_{j'})$ denote the first and last point of $I_j\cup I_{j'}$, respectively.

\begin{lem}\label{lem2}
If $\mb{X}_I=(X_1,\hdots,\, X_d)$ satisfies conditions (i) and (ii) then, for all $(\lambda_1,\hdots,\, \lambda_p)\in (0,\infty)^p$,
\begin{eqnarray}\label{lem2.1}
\dst E\left(\bigvee_{j=1}^p M(I_j)^{\lambda_j}\right)=
\frac{\dst \ell_{_{\mb{X}_I}}\left(\sigma_1^{\eta}\sum_{j=1}^p\lambda_j^{\eta}\delta_1(I_j),\hdots,\, \,
\sigma_d^{\eta}\sum_{j=1}^p\lambda_j^{\eta}\delta_d(I_j)\right)}
{\dst 1+\ell_{_{\mb{X}_I}}\left(\sigma_1^{\eta}\sum_{j=1}^p\lambda_j^{\eta}\delta_1(I_j),\hdots,\, \,
\sigma_d^{\eta}\sum_{j=1}^p\lambda_j^{\eta}\delta_d(I_j)\right)}.
\end{eqnarray}

\end{lem}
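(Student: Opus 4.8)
The plan is to express the expectation of the bounded nonnegative variable $W:=\bigvee_{j=1}^p M(I_j)^{\lambda_j}$ through its survival function and then reduce $P(W\le u)$ to the joint distribution already obtained in Lemma \ref{lem1}. Since each $F_i(X_i)$ takes values in $[0,1]$, so does every $M(I_j)$ and hence $W$; the tail-integral identity then gives $E(W)=\int_0^1 P(W>u)\,du=1-\int_0^1 P(W\le u)\,du$, so it suffices to evaluate $P(W\le u)$ for $u\in(0,1)$.

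First I would rewrite the event: the maximum $\bigvee_j M(I_j)^{\lambda_j}$ is at most $u$ exactly when each $M(I_j)^{\lambda_j}\le u$, and since $\lambda_j>0$ this is $M(I_j)\le u^{1/\lambda_j}$. Thus
\[
P(W\le u)=P\Big(M(I_j)\le u^{1/\lambda_j},\ j=1,\hdots,\,p\Big),
\]
which is precisely the probability computed in Lemma \ref{lem1} under the substitution $u_j=u^{1/\lambda_j}$.

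The one step that requires care is the homogeneity reduction. With $u_j=u^{1/\lambda_j}$ we have $\ln u_j=\lambda_j^{-1}\ln u$, so each coordinate argument in Lemma \ref{lem1} becomes $\big(-\sigma_i/\ln u_j\big)^{\eta}=\sigma_i^{\eta}\lambda_j^{\eta}(-\ln u)^{-\eta}$. The scalar $(-\ln u)^{-\eta}$ is common to all $d$ coordinates, so condition (ii) (homogeneity of order $-1/\eta$) lets me pull it out with exponent $\big((-\ln u)^{-\eta}\big)^{-1/\eta}=-\ln u$. Writing $A$ for the value $\ell_{_{\mb{X}_I}}\big(\sigma_1^{\eta}\sum_{j=1}^p\lambda_j^{\eta}\delta_1(I_j),\hdots,\,\sigma_d^{\eta}\sum_{j=1}^p\lambda_j^{\eta}\delta_d(I_j)\big)$, this yields $P(W\le u)=\exp\{-(-\ln u)\,A\}=u^{A}$.

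Finally I would integrate: $\int_0^1 u^{A}\,du=(1+A)^{-1}$, hence $E(W)=1-(1+A)^{-1}=A/(1+A)$, which is exactly \eqref{lem2.1}. I expect the only real obstacle to be the bookkeeping of the $\eta$-exponents in the homogeneity step and checking that the scaling factor is genuinely shared by every coordinate so that condition (ii) can be applied; the remaining ingredients — the tail-integral identity and the elementary integral of $u^A$ — are routine.
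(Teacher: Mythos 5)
Your proposal is correct and follows essentially the same route as the paper: both reduce the event $\{W\le u\}$ to the joint event of Lemma \ref{lem1} with $u_j=u^{1/\lambda_j}$, use the homogeneity of order $-1/\eta$ to obtain $P(W\le u)=u^{A}$, and then integrate. The only (immaterial) difference is that you integrate the survival function $1-u^{A}$ while the paper integrates $u$ against the density $Au^{A-1}$; both give $A/(1+A)$.
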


\begin{proof}
From Lemma \ref{lem1} and by applying the homogeneity of order $-1/\eta$ of $\ell_{\mb{X}_I}$, we have
\begin{eqnarray}\nn
\dst P\left(\bigvee_{j=1}^p M(I_j)\leq u^{\lambda_j^{-1}}\right)
=u^{\ell_{_{\mb{X}_I}}\left(\sigma_1^{\eta}\sum_{j=1}^p\lambda_j^{\eta}\delta_1(I_j),\hdots,\,
\sigma_d^{\eta}\sum_{j=1}^p\lambda_j^{\eta}\delta_d(I_j)\right)}
\end{eqnarray}
and
\begin{eqnarray}\nn
\dst E\left(\bigvee_{j=1}^p M(I_j)^{\lambda_j}\right)=\dst\int_0^1 u^{\ell_{_{\mb{X}_I}}\left(\sigma_1^{\eta}\sum_{j=1}^p\lambda_j^{\eta}\delta_1(I_j),\hdots,\,
\sigma_d^{\eta}\sum_{j=1}^p\lambda_j^{\eta}\delta_d(I_j)\right)}\ell_{_{\mb{X}_I}}\left(\sigma_1^{\eta}\sum_{j=1}^p\lambda_j^{\eta}\delta_1(I_j),\hdots,\,
\sigma_d^{\eta}\sum_{j=1}^p\lambda_j^{\eta}\delta_d(I_j)\right) du,
\end{eqnarray}
which leads to the result.
\end{proof}

The natural extension of the madogram to our context is the function
$$
\nu_{_{\mb{X}_{I_1},\hdots,\, \mb{X}_{I_p}}}(\lambda_1,\hdots,\,\lambda_p)=e(\lambda_1,\hdots,\, \lambda_p)-\frac{1}{p}\sum_{i=1}^pE\left(M(I_j)^{\lambda_j}\right),\,(\lambda_1,\hdots,\, \lambda_p)\in(0,\infty)^p.
$$

Motivated by the relation between $E\left(\bigvee_{j=1}^p M(I_j)^{\lambda_j}\right)$ and $\ell_{_{\mb{X}_I}}$ presented in Lemma \ref{lem2}, we first propose the following definition for the extremal dependence function between $\mb{X}_{I_1},\hdots,\,\mb{X}_{I_p}$.

\begin{defi}\label{def1}
If $\mb{X}_I=(X_1,\hdots,\, X_d)$ satisfies conditions (i) and (ii) then the extremal dependence function $\varepsilon_{_{\mb{X}_{I_1},\hdots,\, \mb{X}_{I_p}}}(\lambda_1,\hdots,\,\lambda_p)$ among $\mb{X}_{I_1},\hdots,\, \mb{X}_{I_p}$ is defined by
\begin{eqnarray}\nn
\dst \varepsilon_{_{\mb{X}_{I_1},\hdots,\, \mb{X}_{I_p}}}(\lambda_1,\hdots,\,\lambda_p)=\frac{ E\left(\bigvee_{j=1}^p M(I_j)^{\lambda_j}\right)}{1-E\left(\bigvee_{j=1}^p M(I_j)^{\lambda_j}\right)},\, (\lambda_1,\hdots,\, \lambda_p)\in (0,\infty)^p\,.
\end{eqnarray}
\end{defi}

As a consequence of Lema \ref{lem2} and Definition \ref{def1} which compares the distances of $E\left(\bigvee_{j=1}^p M(I_j)^{\lambda_j}\right)\in(0,1)$ to zero and one, we have the following property that discloses $\varepsilon_{_{\mb{X}_{I_1},\hdots,\, \mb{X}_{I_p}}}(\lambda_1,\hdots,\,\lambda_p)$ as a measure of the dependence between ${{\mb{X}_{I_1},\hdots,\, \mb{X}_{I_p}}}$.

\begin{pro}\label{pro1}
If $\mb{X}_I=(X_1,\hdots,\, X_d)$ satisfies conditions (i) and (ii) then, for all $(\lambda_1,\hdots,\, \lambda_p)\in (0,\infty)^p$,
\begin{eqnarray}\nn
\dst \varepsilon_{_{\mb{X}_{I_1},\hdots,\, \mb{X}_{I_p}}}(\lambda_1,\hdots,\,\lambda_p)=
\ell _{_{\mb{X}_I}}\left(\sigma_1^{\eta}\sum_{j=1}^p\lambda_j^{\eta}\delta_1(I_j),\hdots,\,
\sigma_d^{\eta}\sum_{j=1}^p\lambda_j^{\eta}\delta_d(I_j)\right).
\end{eqnarray}
\end{pro}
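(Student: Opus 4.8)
The plan is to derive Proposition \ref{pro1} directly from Definition \ref{def1} and the explicit formula for $E\left(\bigvee_{j=1}^p M(I_j)^{\lambda_j}\right)$ established in Lemma \ref{lem2}. No further probabilistic input is required: once those two ingredients are in place, the statement reduces to a single algebraic simplification, so the whole argument is a substitution followed by cancellation.

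First I would abbreviate the argument of the exponent measure by setting
$$
L:=\ell_{_{\mb{X}_I}}\left(\sigma_1^{\eta}\sum_{j=1}^p\lambda_j^{\eta}\delta_1(I_j),\hdots,\, \sigma_d^{\eta}\sum_{j=1}^p\lambda_j^{\eta}\delta_d(I_j)\right),
$$
so that Lemma \ref{lem2} reads compactly as $E\left(\bigvee_{j=1}^p M(I_j)^{\lambda_j}\right)=\dfrac{L}{1+L}$. I would then insert this identity into the defining ratio of Definition \ref{def1}. The numerator becomes $L/(1+L)$, while the denominator is $1-L/(1+L)=1/(1+L)$; dividing the two cancels the common factor $1+L$ and leaves exactly $L$, which is precisely the expression claimed in the proposition.

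The key structural observation, which makes the cancellation transparent, is that the map $x\mapsto x/(1-x)$ appearing in Definition \ref{def1} is the functional inverse of the map $y\mapsto y/(1+y)$ produced in Lemma \ref{lem2}; composing them therefore returns $L$ unchanged. There is no genuine obstacle to overcome here. The only point that deserves explicit mention is the legitimacy of the algebra: since $(\lambda_1,\hdots,\,\lambda_p)\in(0,\infty)^p$ forces $L\in(0,\infty)$, one has $E\left(\bigvee_{j=1}^p M(I_j)^{\lambda_j}\right)\in(0,1)$, so the denominator $1-E\left(\bigvee_{j=1}^p M(I_j)^{\lambda_j}\right)$ in Definition \ref{def1} is strictly positive and the division is valid. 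With this remark in place, the identity $\varepsilon_{_{\mb{X}_{I_1},\hdots,\, \mb{X}_{I_p}}}(\lambda_1,\hdots,\,\lambda_p)=L$ follows immediately.
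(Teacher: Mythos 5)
Your argument is correct and is exactly the route the paper intends: the paper states Proposition \ref{pro1} as an immediate consequence of Lemma \ref{lem2} and Definition \ref{def1}, i.e.\ substituting $E\bigl(\bigvee_{j=1}^p M(I_j)^{\lambda_j}\bigr)=L/(1+L)$ into the ratio $E/(1-E)$ and cancelling. Your added remark that $L\in(0,\infty)$ guarantees the denominator is nonzero is a sensible (if minor) completion of the same computation.
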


Therefore, the extremal dependence function among $\mb{X}_{I_1},\hdots,\, \mb{X}_{I_p}$ at the point $(\lambda_1,\hdots,\,\lambda_p)$ coincides with the tail dependence function of $\mb{X}_I$ at the point
$$((\sigma_1\lambda_1)^{\eta},\hdots,\,(\sigma_{\omega(I_1)}\lambda_1)^{\eta},
\,(\sigma_{\alpha(I_2)}\lambda_2)^{\eta},\hdots,\,(\sigma_{\omega(I_2)}\lambda_2)^{\eta},
\hdots,\,(\sigma_{\alpha(I_p)}\lambda_p)^{\eta},\hdots,
\,(\sigma_{\omega(I_p)}\lambda_p)^{\eta}).
$$
In the context of the validity of conditions (i) and (ii), by Proposition \ref{pro1}, we have
\begin{eqnarray}\label{ce1}
\dst \varepsilon_{_{\mb{X}_{I_1},\hdots,\, \mb{X}_{I_p}}}(1,\hdots,1)=
\ell _{_{\mb{X}_I}}\left(\sigma_1^{\eta},\hdots,\,
\sigma_d^{\eta}\right),
\end{eqnarray}
\begin{eqnarray}\nn
\dst {\varepsilon_{_{\mb{X}_{I_j}, \mb{X}_{I_{j'}}}}}(1,1)=
{\ell_{_{\mb{X}_{I_j\cup I_{j'}}}}}\left(\sigma_{\alpha(I_j)}^{\eta},\hdots,\,
\sigma_{\omega(I_j)}^{\eta},\,\sigma_{\alpha(I_{j'})}^{\eta},\hdots,\,
\sigma_{\omega(I_{j'})}^{\eta}\right),\,  1\leq j<j'\leq p
\end{eqnarray}
and
\begin{eqnarray}\nn
\dst \varepsilon_{_{\mb{X}_{I_j}}}(1)=
\ell _{_{\mb{X}_{I_j}}}\left(\sigma_{\alpha(I_j)}^{\eta},\hdots,\,
\sigma_{\omega(I_j)}^{\eta}\right),\,  1\leq j\leq p\,.
\end{eqnarray}
\nid Note that, when $\eta=1=\sigma_i$, $i=1,\hdots,\,d$,  $\varepsilon_{_{\mb{X}_{I_1},\hdots,\, \mb{X}_{I_p}}}(1,\hdots,\,1)$ coincides with the usual concept of extremal coefficient $\varepsilon_{_{\mb{X}}}$ of $\mb{X}$. Under this framework, the family of possible extremal coefficients of all sub-vectors of $\mb{X}$ is characterized in Strokorb and Schlather (\cite{strok+schla12} 2012).

Moreover, since $F_{_{\mb{X}_I}}$ is a multivariate extreme values (MEV) model, we have, for $\mb{t}=(t_1,\hdots,t_d)$,
\begin{eqnarray}\nn
\dst
\bigwedge_{j=1}^p\ell _{_{\mb{X}_{I_j}}}(\mb{t}_{_{I_j}})\leq \ell_{_{\mb{X}_I}}(\mb{t})\leq \sum_{j=1}^p\ell _{_{\mb{X}_{I_j}}}(\mb{t}_{_{I_j}}),
\end{eqnarray}
which, along with Proposition \ref{pro1}, alow us to bound the extremal dependence function of $\mb{X}_{I_1},\hdots,\mb{X}_{I_p}$.

\begin{pro}\label{pro2}
If $\mb{X}_I=(X_1,\hdots,\, X_d)$ satisfies conditions (i) and (ii) then, for all $(\lambda_1,\hdots,\, \lambda_p)\in (0,\infty)^p$, we have
$$
 \bigwedge_{j=1}^p\lambda_j^{-1}\varepsilon_{_{\mb{X}_{I_j}}}(1)\leq \varepsilon_{_{\mb{X}_{I_1},\hdots,\, \mb{X}_{I_p}}}(\lambda_1,\hdots,\,\lambda_p)\leq \sum_{j=1}^p\lambda_j^{-1}\varepsilon_{_{\mb{X}_{I_j}}}(1),
$$
with the upper bound corresponding to independent random vectors $\mb{X}_{I_1},\hdots,\, \mb{X}_{I_p}$ and the lower bound to totally dependent margins $X_1,\hdots, X_d$.

\end{pro}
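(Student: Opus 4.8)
The plan is to derive both inequalities at once from Proposition~\ref{pro1}, the homogeneity of order $-1/\eta$, and the MEV sandwich inequality displayed just above, and then to locate the two extremal cases by evaluating $\ell_{_{\mb{X}_I}}$ at the independent and the totally dependent configurations.

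First I would invoke Proposition~\ref{pro1} to write
\[
\varepsilon_{_{\mb{X}_{I_1},\hdots,\,\mb{X}_{I_p}}}(\lambda_1,\hdots,\,\lambda_p)=\ell_{_{\mb{X}_I}}(\mb{t}),\qquad t_i=\sigma_i^{\eta}\sum_{j=1}^p\lambda_j^{\eta}\delta_i(I_j).
\]
Since $I_1,\hdots,\,I_p$ partition $I$, for each index $i$ only the indicator of the block containing it survives, so $t_i=(\sigma_i\lambda_j)^{\eta}$ whenever $i\in I_j$, and the block sub-vector is $\mb{t}_{_{I_j}}=\lambda_j^{\eta}\,(\sigma_{\alpha(I_j)}^{\eta},\hdots,\,\sigma_{\omega(I_j)}^{\eta})$. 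The key reduction is then a single use of homogeneity: factoring the scalar $\lambda_j^{\eta}$ out of $\mb{t}_{_{I_j}}$ and using that $\ell_{_{\mb{X}_{I_j}}}$ is homogeneous of order $-1/\eta$ gives
\[
\ell_{_{\mb{X}_{I_j}}}(\mb{t}_{_{I_j}})=\lambda_j^{-1}\,\ell_{_{\mb{X}_{I_j}}}\big(\sigma_{\alpha(I_j)}^{\eta},\hdots,\,\sigma_{\omega(I_j)}^{\eta}\big)=\lambda_j^{-1}\varepsilon_{_{\mb{X}_{I_j}}}(1),
\]
the last equality being the displayed expression for $\varepsilon_{_{\mb{X}_{I_j}}}(1)$.

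Substituting these two identities into the sandwich inequality evaluated at this particular $\mb{t}$ immediately produces
\[
\bigwedge_{j=1}^p\lambda_j^{-1}\varepsilon_{_{\mb{X}_{I_j}}}(1)\le\varepsilon_{_{\mb{X}_{I_1},\hdots,\,\mb{X}_{I_p}}}(\lambda_1,\hdots,\,\lambda_p)\le\sum_{j=1}^p\lambda_j^{-1}\varepsilon_{_{\mb{X}_{I_j}}}(1),
\]
so the two-sided estimate is routine. For the extremal cases I would argue on $F_{_{\mb{X}_I}}$ directly. If $\mb{X}_{I_1},\hdots,\,\mb{X}_{I_p}$ are independent then $F_{_{\mb{X}_I}}(\mb{t})=\prod_{j}F_{_{\mb{X}_{I_j}}}(\mb{t}_{_{I_j}})$, hence $\ell_{_{\mb{X}_I}}$ equals the sum of the block exponent measures and the right-hand bound is attained. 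If instead $X_1,\hdots,\,X_d$ are totally dependent, $F_{_{\mb{X}_I}}$ becomes the Fr\'echet--Hoeffding upper bound $\bigwedge_i F_i(t_i)$, whence $\ell_{_{\mb{X}_I}}(\mb{t})=\bigvee_i\sigma_i t_i^{-1/\eta}=\bigvee_{j}\ell_{_{\mb{X}_{I_j}}}(\mb{t}_{_{I_j}})=\bigvee_{j}\lambda_j^{-1}\varepsilon_{_{\mb{X}_{I_j}}}(1)$, which is the smallest value of $\varepsilon$ compatible with the prescribed margins.

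I expect the identification of this lower extremal case to be the main obstacle. The inequality itself drops out of the sandwich, but matching ``totally dependent margins'' to the bottom of the admissible range is delicate, because the Fr\'echet upper bound at the level of distribution functions turns into a \emph{pointwise maximum} at the level of exponent measures; the comonotone value $\bigvee_{j}\lambda_j^{-1}\varepsilon_{_{\mb{X}_{I_j}}}(1)$ is the tight lower endpoint, and one should check that it is this maximum (rather than the looser minimum) that total dependence realizes. I would therefore verify blockwise that the comonotone exponent measure coincides with $\bigvee_{j}\ell_{_{\mb{X}_{I_j}}}(\mb{t}_{_{I_j}})$ before concluding.
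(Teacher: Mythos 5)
Your derivation of the two-sided inequality is exactly the paper's (implicit) argument: the paper offers no separate proof, only the remark that the MEV sandwich $\bigwedge_{j=1}^p\ell_{_{\mb{X}_{I_j}}}(\mb{t}_{_{I_j}})\leq\ell_{_{\mb{X}_I}}(\mb{t})\leq\sum_{j=1}^p\ell_{_{\mb{X}_{I_j}}}(\mb{t}_{_{I_j}})$ combined with Proposition~\ref{pro1} yields the bounds, and your homogeneity step $\ell_{_{\mb{X}_{I_j}}}(\mb{t}_{_{I_j}})=\lambda_j^{-1}\varepsilon_{_{\mb{X}_{I_j}}}(1)$ is precisely the computation that makes this work. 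On the extremal cases your hesitation about the lower bound is well founded, and your blockwise computation settles it: comonotone margins give $\ell_{_{\mb{X}_I}}(\mb{t})=\bigvee_{j}\ell_{_{\mb{X}_{I_j}}}(\mb{t}_{_{I_j}})=\bigvee_{j}\lambda_j^{-1}\varepsilon_{_{\mb{X}_{I_j}}}(1)$, and since marginalization alone already forces $F_{_{\mb{X}_I}}(\mb{t})\leq F_{_{\mb{X}_{I_j}}}(\mb{t}_{_{I_j}})$ for every $j$, hence $\ell_{_{\mb{X}_I}}(\mb{t})\geq\bigvee_{j}\ell_{_{\mb{X}_{I_j}}}(\mb{t}_{_{I_j}})$, the maximum is the tight lower endpoint and the proposition's $\bigwedge$ is a strictly weaker bound whenever the block quantities differ. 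Consequently total dependence attains the \emph{stated} lower bound only when the values $\lambda_j^{-1}\varepsilon_{_{\mb{X}_{I_j}}}(1)$ all coincide --- in particular at $(\lambda_1,\hdots,\lambda_p)=(1,\hdots,1)$, where total dependence forces every $\varepsilon_{_{\mb{X}_{I_j}}}(1)=1$ so that the minimum, the maximum and the actual value are all equal to $1$. Your proof is correct and, unlike the paper's one-line assertion, makes this caveat visible; the attribution of the lower bound to totally dependent margins should be read in that restricted sense (or with $\bigwedge$ sharpened to $\bigvee$).
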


\nid Observe that, if $\mb{X}_{I_1},\hdots,\, \mb{X}_{I_p}$ are totally dependent vectors, then the copula of $\mb{X}$ is the minimum copula (Nelsen \cite{nel06} 2006).\\

Now we analyze how  $\varepsilon_{_{\mb{X}_{I_i},\mb{X}_{I_{j'}}}}(\lambda_j,\lambda_{j'})$ relates with the dependence within the tails of $\mb{X}_{I_i}$ and $\mb{X}_{I_{j'}}$, $1\leq j<j'\leq p$.  Analogously to Ferreira and Ferreira (\cite{fer+fer12b} 2012), we are going to consider an upper tail dependence function of vector $(\mb{X}_{I_j},\mb{X}_{I_{j'}})$ given by the common value of
\begin{eqnarray}\label{chi00}
\dst \lim_{t\to\infty}P(M(I_j)>1-\lambda_j/t|M(I_{j'})>1-\lambda_{j'}/t)
\lambda_{j'}\varepsilon_{_{\mb{X}_{I_{j'}}}}(1)
\end{eqnarray}
and
\begin{eqnarray}\label{chi01}
\dst \lim_{t\to\infty}P(M(I_{j'})>1-\lambda_{j'}/t|M(I_{j})>1-\lambda_{j}/t)
\lambda_{j}\varepsilon_{_{\mb{X}_{I_{j}}}}(1).
\end{eqnarray}

Considering the first limit, observe that
\begin{eqnarray}\label{chi1}
\begin{array}{rl}
&\dst \lim_{t\to\infty}P(M(I_j)>1-\lambda_j/t|M(I_{j'})>1-\lambda_{j'}/t)
\\\\
=& \dst \lim_{t\to\infty}\left(1+ \frac{1-P(M(I_j)\leq 1-\lambda_j/t)}{1-P(M(I_{j'})\leq 1-\lambda_{j'}/t)}- \frac{1-P(M(I_j)\leq 1-\lambda_j/t,M(I_{j'})\leq 1-\lambda_{j'}/t)}{1-P(M(I_{j'})\leq 1-\lambda_{j'}/t)}
\right)
\end{array}
\end{eqnarray}
and that
\begin{eqnarray}\nn
\begin{array}{rl}
&\dst \lim_{t\to\infty}t\,P(M(I_j)\leq 1-\lambda_j/t,M(I_{j'})\leq 1-\lambda_{j'}/t)\\\\
=&-\ln C_{_{\mb{X}_{I_j}, \mb{X}_{I_{j'}}}}(e^{-\lambda_j},\hdots,e^{-\lambda_j},e^{-\lambda_{j'}},\hdots,e^{-\lambda_{j'}}),
\end{array}
\end{eqnarray}
since $C_{_{\mb{X}_{I_j}, \mb{X}_{I_{j'}}}}$ is max-stable. By Lemma \ref{lem1}, we obtain
\begin{eqnarray}\nn
\begin{array}{rl}
&-\ln C_{_{\mb{X}_{I_j}, \mb{X}_{I_{j'}}}}(e^{-\lambda_j},\hdots,e^{-\lambda_j},e^{-\lambda_{j'}},\hdots,e^{-\lambda_{j'}})\\\\
=& \ell_{_{\mb{X}_{I_j\cup I_{j'}}}}\left(\left(\frac{\sigma_{\alpha(I_j)}}{\lambda_j}\right)^{\eta},\hdots,
\left(\frac{\sigma_{\omega(I_j)}}{\lambda_j}\right)^{\eta},
\left(\frac{\sigma_{\alpha(I_{j'})}}{\lambda_{j'}}\right)^{\eta},\hdots,
\left(\frac{\sigma_{\omega(I_{j'})}}{\lambda_{j'}}\right)^{\eta}\right)\,.
\end{array}
\end{eqnarray}
By the homogeneity of order $-1/\eta$ of $\ell$, the limit in (\ref{chi1}) becomes
\begin{eqnarray}\nn
1+\frac{\lambda_j\varepsilon_{_{\mb{X}_{I_j}}}(1)}{\lambda_{j'}\varepsilon_{_{\mb{X}_{I_{j'}}}}(1)}
-\frac{\varepsilon_{_{\mb{X}_{I_j}, \mb{X}_{I_{j'}}}}(\lambda_j^{-1},\lambda_{j'}^{-1})}{\lambda_{j'}\varepsilon_{_{\mb{X}_{I_{j'}}}}(1)}
\end{eqnarray}
Switching the roles of $j$ and $j'$ in the conditional probabilities, we can see that both functions in (\ref{chi00}) and (\ref{chi01}) are equal and its common value is given in the following definition.

\begin{defi}
For $\mb{X}_I=(X_1,\hdots,X_d)$ under conditions (i) and (ii) and $1\leq j<j'\leq p$, the tail dependence function $\chi_{_{\mb{X}_{I_j},\mb{X}_{I_{j'}}}}(\lambda_j,\lambda_{j'})$ for $(\mb{X}_{I_j},\mb{X}_{I_{j'}})$ is defined by
\begin{eqnarray}\nn
\chi_{_{\mb{X}_{I_j},\mb{X}_{\mb{X}_{I_{j'}}}}}(\lambda_j,\lambda_{j'})
=\lambda_{j}\varepsilon_{_{\mb{X}_{I_{j}}}}(1)+\lambda_{j'}\varepsilon_{_{\mb{X}_{I_{j'}}}}(1)
-\varepsilon_{_{\mb{X}_{I_j}, \mb{X}_{I_{j'}}}}(\lambda_j^{-1},\lambda_{j'}^{-1})
\end{eqnarray}
and the value $\chi_{_{\mb{X}_{I_j},\mb{X}_{I_{j'}}}}(1,1)\equiv \chi_{_{\mb{X}_{I_j},\mb{X}_{I_{j'}}}}$ is denoted by coefficient of tail dependence for $(\mb{X}_{I_j},\mb{X}_{I_{j'}})$.\\
\end{defi}

In the following we present a property of the generalized madogram coming from the function $\varepsilon_{_{\mb{X}_{I_1},\hdots,\, \mb{X}_{I_p}}}(\lambda_1,\hdots,\,\lambda_p)$.
\begin{pro}\label{pro2}
If $\mb{X}_I=(X_1,\hdots,\, X_d)$ satisfies conditions (i) and (ii) then, for all $(\lambda_1,\hdots,\, \lambda_p)\in (0,\infty)^p$,
\begin{eqnarray}\nn
\dst \nu_{_{\mb{X}_{I_1},\hdots,\, \mb{X}_{I_p}}}(\lambda_1,\hdots,\,\lambda_p)=\frac{\varepsilon_{_{\mb{X}_{I_1},\hdots,\, \mb{X}_{I_p}}}(\lambda_1,\hdots,\,\lambda_p)}{1+\varepsilon_{_{\mb{X}_{I_1},\hdots,\, \mb{X}_{I_p}}}(\lambda_1,\hdots,\,\lambda_p)}
-\frac{1}{p}\sum_{j=1}^p\frac{\varepsilon_{_{\mb{X}_{I_j}}}(\lambda_j)}
{1+\varepsilon_{_{\mb{X}_{I_j}}}(\lambda_j)}.
\end{eqnarray}
\end{pro}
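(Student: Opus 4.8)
The plan is to reduce the identity to a purely algebraic manipulation of the two M\"obius-type maps $x\mapsto x/(1-x)$ and $y\mapsto y/(1+y)$, which invert one another on the relevant range. First I would record the elementary fact that if $A\in(0,1)$ and $\varepsilon=A/(1-A)$, then
$$
\frac{\varepsilon}{1+\varepsilon}=\frac{A/(1-A)}{1+A/(1-A)}=A.
$$
This single observation inverts Definition \ref{def1} and is essentially the whole content of the proof.

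Next I would apply this observation twice. Writing $A=E\left(\bigvee_{j=1}^p M(I_j)^{\lambda_j}\right)$, Definition \ref{def1} gives $\varepsilon_{_{\mb{X}_{I_1},\hdots,\, \mb{X}_{I_p}}}(\lambda_1,\hdots,\,\lambda_p)=A/(1-A)$, so the first term on the right-hand side of the claim equals $A$. Applying the same definition to a single block $I_j$ (the case $p=1$), with $A_j=E\left(M(I_j)^{\lambda_j}\right)$, yields $\varepsilon_{_{\mb{X}_{I_j}}}(\lambda_j)=A_j/(1-A_j)$ and hence $\varepsilon_{_{\mb{X}_{I_j}}}(\lambda_j)/(1+\varepsilon_{_{\mb{X}_{I_j}}}(\lambda_j))=A_j$.

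Substituting these two identities into the right-hand side collapses it to
$$
A-\frac{1}{p}\sum_{j=1}^p A_j
=E\left(\bigvee_{j=1}^p M(I_j)^{\lambda_j}\right)-\frac{1}{p}\sum_{j=1}^p E\left(M(I_j)^{\lambda_j}\right),
$$
which is precisely the definition of $\nu_{_{\mb{X}_{I_1},\hdots,\, \mb{X}_{I_p}}}(\lambda_1,\hdots,\,\lambda_p)$, completing the argument.

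The only point requiring care, and the closest thing to an obstacle, is verifying that the denominators $1-A$ and $1-A_j$ do not vanish, so that all fractions are well defined. This is immediate from Lemma \ref{lem2}, which represents each expectation in the form $\ell/(1+\ell)$ with $\ell=\ell_{_{\mb{X}_I}}(\cdots)\in(0,\infty)$ (respectively $\ell_{_{\mb{X}_{I_j}}}(\cdots)$), forcing $A,A_j\in(0,1)$. Thus conditions (i)--(ii) enter the proof only through Lemma \ref{lem2}, to guarantee positivity and finiteness of the exponent measure; the homogeneity of $\ell$ plays no further role here.
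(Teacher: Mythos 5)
Your proof is correct and is essentially the argument the paper intends: Proposition \ref{pro2} is stated without proof, and the identity follows exactly as you say by inverting Definition \ref{def1} via $\varepsilon/(1+\varepsilon)=A$ for both the full vector and each block, with Lemma \ref{lem2} guaranteeing that each expectation lies in $(0,1)$ so the fractions are well defined. Your closing remark that conditions (i)--(ii) enter only through Lemma \ref{lem2} is also accurate.
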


In particular, considering $p=d=2$ and $\lambda_1=\lambda_2=1$, we recover the initial relation between the madogram $\nu$ and the extremal coefficient $\varepsilon$, given by ${\nu=\frac{\varepsilon-1}{2(\varepsilon+1)}}$ (Cooley \ea~\cite{coo+06} 2006).

\section{Examples}\label{sex}

Consider $r\geq 1$ integer, $\beta_{ji}$, $i=1,\hdots,d$, $j=1,\hdots,r$, non negative constants such that $\sum_{j=1}^r\beta_{ji}=1$,  $i=1,\hdots,d$, and $\alpha_j$, $j=1,\hdots,r$, constants in $(0,1]$. Consider $C_j$, $j=1,\hdots,r$,  max-stable copulas and define
\begin{eqnarray}\label{coplog}
C_{\eta}(u_1,\hdots,u_d)=\exp\es\{-\sum_{j=1}^r\es(-\ln C_j\es(e^{-(-\beta_{j1}\ln u_1)^{\eta/\alpha_j}},\hdots,e^{-(-\beta_{jd}\ln u_d)^{\eta/\alpha_j}}\di) \di)^{\alpha_j/\eta}\di\}\,,
\end{eqnarray}
with $\eta\in(0,1]$ and such that $\alpha_j/\eta\in(0,1]$. This parametric family of copulas can be obtained from a mixture model of various MEV distributions (Ferreira and Pereira \cite{fer+per11} 2011) and encompasses several known copulas such as logistic symmetric and asymmetric and geometric means.

Consider $\mb{X}_I$ has marginals in (i) and copula in (\ref{coplog}). Then
\begin{eqnarray}\nn
F_{_{\mb{X}_I}}(t_1,\hdots,t_d)=\exp\es\{-\sum_{j=1}^r\es(-\ln C_j\es(e^{-(\beta_{j1}\sigma_1 t_1^{-1/\eta} )^{\eta/\alpha_j}},\hdots,e^{-(\beta_{jd}\sigma_d t_d^{-1/\eta})^{\eta/\alpha_j}}\di) \di)^{\alpha_j/\eta}\di\}\,.
\end{eqnarray}
The tail dependence function $\ell_{_{\mb{X}_I}}(t_1,\hdots,t_d)$ is homogeneous of order $-1/\eta$ and thus we are in the context of the previous section. We will consider different particular cases in the choice of the constants and MEV copulas and we determine the respective extremal coefficients and coefficients of tail dependence.

\begin{ex}\label{ex1}
Considering $r=1$, $\beta_{1i}=1$, $i=1,\hdots,d$, we obtain
\begin{eqnarray}\nn
F_{_{\mb{X}_I}}(t_1,\hdots,t_d)=\exp\es\{-\es(-\ln C\es(e^{-(\sigma_1 t_1^{-1/\eta} )^{\eta/\alpha}},\hdots,e^{-(\sigma_d t_d^{-1/\eta})^{\eta/\alpha}}\di) \di)^{\alpha/\eta}\di\}\,.
\end{eqnarray}
and if we take $C=\prod$, we find
\begin{eqnarray}\nn
\begin{array}{rl}
F_{_{\mb{X}_I}}(t_1,\hdots,t_d)=&\exp\es\{-\es((\sigma_1 t_1^{-1/\eta} )^{\eta/\alpha}+\hdots+(\sigma_d t_d^{-1/\eta})^{\eta/\alpha} \di)^{\alpha/\eta}\di\}\\\\
=& \exp\es\{-\es(\sigma_1^{\eta/\alpha} t_1^{-1/\alpha} +\hdots+\sigma_d^{\eta/\alpha} t_d^{-1/\alpha} \di)^{\alpha/\eta}\di\}\,.
\end{array}
\end{eqnarray}
We have
\begin{eqnarray}\nn
\begin{array}{ccc}
\varepsilon_{_{\mb{X}_{I_1},\hdots,\mb{X}_{I_p}}}(1,\hdots,1)=d^{\alpha/\eta}\,, &
\varepsilon_{_{\mb{X}_{I_j}}}(1)=|I_j|^{\alpha/\eta}\,,&
\varepsilon_{_{\mb{X}_{I_j},\mb{X}_{I_{j'}}}}(1,1)=|I_j\cup I_{j'}|^{\alpha/\eta}\,,
\end{array}
\end{eqnarray}
\begin{eqnarray}\nn
\begin{array}{c}
\dst \nu_{_{\mb{X}_{I_1},\hdots,\mb{X}_{I_p}}}(1,\hdots,1)=\frac{d^{\alpha/\eta}}{1+d^{\alpha/\eta}}
-\frac{1}{p}\sum_{j=1}^p\frac{|I_j|^{\alpha/\eta}}{1+|I_j|^{\alpha/\eta}}
\end{array}
\end{eqnarray}
and
\begin{eqnarray}\nn
\begin{array}{c}
\chi_{_{\mb{X}_{I_j},\mb{X}_{I_{j'}}}}=|I_j|^{\alpha/\eta}+|I_{j'}|^{\alpha/\eta}-
(|I_j|+|I_{j'}|)^{\alpha/\eta}\,,
\end{array}
\end{eqnarray}
for all $1\leq j<j'\leq d$, this latter generalizing the known result $\chi_{_{{X}_{j},{X}_{j'}}}=2-2^{\alpha/\eta}$ for the logistic model.
\end{ex}

\begin{ex}\label{ex2}
Considering the previous example with positive constants $\beta_{1i}=\beta_i$, $i=1,\hdots,d$, not necessarily equal to $1$, we have
\begin{eqnarray}\nn
F_{_{\mb{X}_I}}(t_1,\hdots,t_d)=&\exp\es\{-\es((\beta_1\sigma_1)^{\eta/\alpha} t_1^{-1/\alpha}+\hdots+(\beta_d\sigma_d)^{\eta/\alpha} t_d^{-1/\alpha} \di)^{\alpha/\eta}\di\}
\,.
\end{eqnarray}
We obtain
\begin{eqnarray}\nn
\begin{array}{c}
\dst\varepsilon_{_{\mb{X}_{I_1},\hdots,\mb{X}_{I_p}}}(1,\hdots,1)
=\es(\beta_1^{\eta/\alpha}+\hdots+\beta_d^{\eta/\alpha}\di)^{\alpha/\eta}\,, \\\\
\dst
\varepsilon_{_{\mb{X}_{I_j}}}(1)=\es(\sum_{i\in I_j}\beta_i^{\eta/\alpha}\di)^{\alpha/\eta}\,,\,\,\,\,\dst
\varepsilon_{_{\mb{X}_{I_j},\mb{X}_{I_{j'}}}}(1,1)=\es(\sum_{i\in I_j\cup I_{j'}}\beta_i^{\eta/\alpha}\di)^{\alpha/\eta}
\end{array}
\end{eqnarray}
and
\begin{eqnarray}\nn
\begin{array}{c}
\dst \chi_{_{\mb{X}_{I_j},\mb{X}_{I_{j'}}}}=\es(\sum_{i\in I_j}\beta_i^{\eta/\alpha}\di)^{\alpha/\eta}+\es(\sum_{i\in I_{j'}}\beta_i^{\eta/\alpha}\di)^{\alpha/\eta}-
\es(\sum_{i\in I_j\cup I_{j'}}\beta_i^{\eta/\alpha}\di)^{\alpha/\eta}\,,
\end{array}
\end{eqnarray}
for all $1\leq j<j'\leq d$.
\end{ex}

The previous examples consist in asymmetric logistic models. In the following we consider $\beta_{ji}=\beta_j$, $i=1,\hdots,d$, and $r>1$, i.e., weighted geometric means.

\begin{ex}\label{ex3}
Consider $r=2$, ${C_1=\bigwedge}$ and $C_2=\prod$. We have
\begin{eqnarray}\nn
\begin{array}{rl}
F_{_{\mb{X}_I}}(t_1,\hdots,t_d)=&\dst\prod_{j=1}^2\exp\es\{-\beta_j\es(-\ln C_j\es(e^{-\es(\sigma_1 t_1^{-1/\eta}\di)^{\eta/\alpha}} ,\hdots,e^{-\es(\sigma_d t_d^{-1/\eta}\di)^{\eta/\alpha}}\di)\di)^{\alpha/\eta}\di\}\\\\
=&\dst\exp\es\{-\beta_1\es(\bigvee_{i=1}^d\es(\sigma_i t_i^{-1/\eta}\di)^{\eta/\alpha}\di)^{\alpha/\eta}-(1-\beta_1)\es(\sum_{i=1}^d\es(\sigma_i t_i^{-1/\eta}\di)^{\eta/\alpha}\di)^{\alpha/\eta}\di\}\\\\
=&\dst\exp\es\{-\beta_1\bigvee_{i=1}^d\es(\sigma_i t_i^{-1/\eta}\di)-(1-\beta_1)\es(\sum_{i=1}^d\es(\sigma_i t_i^{-1/\eta}\di)^{\eta/\alpha}\di)^{\alpha/\eta}\di\}\,.
\end{array}
\end{eqnarray}
Thus we obtain
\begin{eqnarray}\nn
\begin{array}{c}
\dst\varepsilon_{_{\mb{X}_{I_1},\hdots,\mb{X}_{I_p}}}(1,\hdots,1)
=\beta_1+(1-\beta_1)d^{\alpha/\eta}=\beta_1\es(1-d^{\alpha/\eta}\di)+d^{\alpha/\eta}\,, \\\\
\dst
\varepsilon_{_{\mb{X}_{I_j}}}(1)=\beta_1+(1-\beta_1)|I_j|^{\alpha/\eta}
\end{array}
\end{eqnarray}
and
\begin{eqnarray}\nn
\begin{array}{rl}
\dst \chi_{_{\mb{X}_{I_j},\mb{X}_{I_{j'}}}}=&\dst \beta_1+(1-\beta_1)|I_j|^{\alpha/\eta}
+ \beta_1+(1-\beta_1)|I_{j'}|^{\alpha/\eta}-\beta_1-(1-\beta_1)|I_j\cup I_{j'}|^{\alpha/\eta}\\\\
=&\dst \beta_1\es(1-|I_{j}|^{\alpha/\eta}-|I_{j'}|^{\alpha/\eta}+(|I_{j}|+|I_{j'}|)^{\alpha/\eta}\di)
+|I_{j}|^{\alpha/\eta}+|I_{j'}|^{\alpha/\eta}-(|I_{j}|+|I_{j'}|)^{\alpha/\eta}\,,
\end{array}
\end{eqnarray}
for all $1\leq j<j'\leq d$.
\end{ex}

\section{A note on asymptotic tail independence}\label{sati}

In MEV models satisfying (i) and (ii), we only have tail dependence or tail independence between two marginals $X_j$ and $X_{j'}$ in the sense of
\begin{eqnarray}\nn
\chi_{_{{X}_{j},{X}_{j'}}}=\lim_{t\to\infty}P(F_j({X}_{j})>1-1/t,F_{j'}({X}_{j'})>1-1/t),
\end{eqnarray}
being, respectively, positive and null. Just observe that
\begin{eqnarray}\nn
\begin{array}{l}
\dst P(F_j({X}_{j})>1-1/t,F_{j'}({X}_{j'})>1-1/t)=2t^{-1}-1
+P\es({X}_{j}<\es(-\frac{\ln(1-1/t)}{\sigma_{j}}\di)^{-\eta},
{X}_{j'}<\es(-\frac{\ln(1-1/t)}{\sigma_{j'}}\di)^{-\eta}\di)\\\\
\sim  2t^{-1}-1
+P\es({X}_{j}<\es(\frac{t^{-1}}{\sigma_{j}}\di)^{-\eta},
{X}_{j'}<\es(\frac{t^{-1}}{\sigma_{j'}}\di)^{-\eta}\di)= 2t^{-1}-1+\exp\es\{-\ell_{_{({X}_{j},{X}_{j'})}}\es((t\sigma_{j})^{\eta},(t\sigma_{j'})^{\eta}\di)\di\}
\\\\
\sim  
2t^{-1}-t^{-1}\ell_{_{({X}_{j},{X}_{j'})}}\es(\sigma_{j}^{\eta},\sigma_{j'}^{\eta}\di)
+t^{-2}\frac{\es(\ell_{_{({X}_{j},{X}_{j'})}}\es(\sigma_{j}^{\eta},\sigma_{j'}^{\eta}\di)\di)^2}
{2}
\sim \left\{
\begin{array}{ll}
t^{-1}(2-\ell_{_{({X}_{j},{X}_{j'})}})&,\, \textrm{if }\ell_{_{({X}_{j},{X}_{j'})}}<2\\
t^{-2}\frac{\es(\ell_{_{({X}_{j},{X}_{j'})}}\es(\sigma_{j}^{\eta},\sigma_{j'}^{\eta}\di)\di)^2}{2}&,\, \textrm{if }\ell_{_{({X}_{j},{X}_{j'})}}=2,
\end{array}
\right.
\end{array}
\end{eqnarray}
the first branch corresponding to tail dependence ($\chi_{_{{X}_{j},{X}_{j'}}}=2-\ell_{_{({X}_{j},{X}_{j'})}}$) and the second to independence ($\chi_{_{{X}_{j},{X}_{j'}}}=0$).
However, non-negligible dependence may occur even when we have independence in the limit. A classical example in this context is the multivariate Gaussian model, whose bivariate marginals are asymptotic independent whatever the correlation parameters $\rho_{jj'}<1$. This phenomenon was also noticed in real data applications (see, e.g., Tawn (\cite{taw90} 1990), Guillou \ea ~\cite{gui+14} 2014 and references therein). Ledford and Tawn (\cite{led+taw96} 1996) addresses the modeling of the decay rate of the dependence under asymptotic independence. More precisely, they consider
\begin{eqnarray}\label{LTcond}
\begin{array}{rl}
\dst P(F_j({X}_{j})>1-1/t,F_{j'}({X}_{j'})>1-1/t)=t^{-1/\kappa_{_{{X}_{j},{X}_{j'}}}}\mathcal{L}(t),
\end{array}
\end{eqnarray}
where $\mathcal{L}$ is a slowly varying function (i.e., $\mathcal{L}(s)$, $s>0$, is a real function such that $\mathcal{L}(tx)/\mathcal{L}(t)\to 1$, as $t\to\infty$, $\forall x>0$) and $\kappa_{_{{X}_{j},{X}_{j'}}}\in (0,1]$ is denoted \emph{coefficient of asymptotic tail independence}. Observe that MEV sub-vectors  $(X_j,X_{j'})$ satisfy (\ref{LTcond}) with $\kappa_{_{{X}_{j},{X}_{j'}}}=1$ and $\mathcal{L}(t)=2-\ell_{_{({X}_{j},{X}_{j'})}}$ under tail dependence and $\kappa_{_{{X}_{j},{X}_{j'}}}=1/2$ and $\mathcal{L}(t)=2$ under independence.

In our context of MEV models, we also have
\begin{eqnarray}\nn
\dst \chi_{_{\mb{X}_{I_j},\mb{X}_{I_{j'}}}}=\lim_{t\to\infty}P(M(I_j)>1-1/t,M(I_{j'})>1-1/t)>0\,,
\end{eqnarray}
unless the marginals are independent. If we move to a broader framework than the MEV models, by a similar reasoning as in Ledford and Tawn (\cite{led+taw96} 1996),  2012), we assume
\begin{eqnarray}\label{LTcondMs}
\dst P(M(I_j)>1-1/t,M(I_{j'})>1-1/t)
=t^{-1/\kappa_{_{\mb{X}_{I_j},\mb{X}_{I_{j'}}}}}\mathcal{L}_{_{\mb{X}_{I_j},\mb{X}_{I_{j'}}}}(t)\,,
\end{eqnarray}
where function $\mathcal{L}_{_{\mb{X}_{I_j},\mb{X}_{I_{j'}}}}$ is slowly varying and $\kappa_{_{\mb{X}_{I_j},\mb{X}_{I_{j'}}}}\in (0,1]$  corresponds to the block coefficient of asymptotic tail independence introduced in Ferreira and Ferreira (\cite{fer+fer12b}). Under the validity of condition
\begin{eqnarray}\label{condetaMs}
\dst P(\min_{j\in S}\{F_j(X_j)\}>1-1/t,\min_{j'\in T}\{F_{j'}(X_{j'})\}>1-1/t)
=t^{-1/\kappa_{_{\mb{X}_{S},\mb{X}_{T}}}}\mathcal{L}_{_{\mb{X}_{S},\mb{X}_{T}}}(t)\,,
\end{eqnarray}
for all $\emptyset\not= S\subset I_j$ and $\emptyset\not= T\subset I_{j'}$, where the respective functions $\mathcal{L}_{_{\mb{X}_{S},\mb{X}_{T}}}$ are slowly varying, we can relate $\kappa_{_{\mb{X}_{I_j},\mb{X}_{I_{j'}}}}$ with the bivariate $\kappa_{_{{X}_{j},{X}_{j'}}}$, for $j\in I_j$ and $j'\in I_{j'}$. More precisely, by Proposition 2.9 in Ferreira and Ferreira (\cite{fer+fer12b} 2012), we have
\begin{eqnarray}\nn
\dst \kappa_{_{\mb{X}_{I_j},\mb{X}_{I_{j'}}}}=\max\{\kappa_{_{{X}_{j},{X}_{j'}}}:\, j\in I_j,\, j'\in I_{j'}\}\,.
\end{eqnarray}

Consider $\mb{X}_I=(X_1,\hdots,X_d)$ has an inverted MEV copula, that is, the survival copula $\overline{C}_{_{\mb{X}_I}}(u_1,\hdots, u_d)=P(F_1(X_1)\geq u_1,\hdots,F_d(X_d)\geq u_d)$ is expressed by
\begin{eqnarray}\nn
\dst \overline{C}_{_{\mb{X}_I}}(u_1,\hdots, u_d)=\exp\es\{-\ell_{_{\mb{Y}_I}}(-1/\ln(1-u_1),\hdots,-1/\ln(1-u_d))\di\},
\end{eqnarray}
where $\ell_{_{\mb{Y}_I}}$ is an exponent measure of some MEV distributed $\mb{Y}_I=(Y_1,\hdots,Y_d)$ (Wadsworth and Tawn \cite{wad+taw12} 2012). Assuming that $\mb{Y}_I$ satisfies conditions (i) and (ii), we have
\begin{eqnarray}\nn
\begin{array}{rl}
\dst &P(F_j({X}_{j})>1-1/t,F_{j'}({X}_{j'})>1-1/t)=\exp\es\{-\ell_{({Y}_{j},{Y}_{j'})}\es(\es(-\frac{\sigma_i}{\ln (1/t)}\di)^{\eta},\es(-\frac{\sigma_j}{\ln (1/t)}\di)^{\eta} \di)\di\}\\\\
=& \exp\es\{-(-\ln (1/t))\ell_{({Y}_{j},{Y}_{j'})}\es(\sigma_i^{\eta},\sigma_j^{\eta}\di)\di\}
=t^{-\ell_{({Y}_{j},{Y}_{j'})}\es(\sigma_i^{\eta},\sigma_j^{\eta}\di)},
\end{array}
\end{eqnarray}
and thus $\kappa_{_{{X}_{j},{X}_{j'}}}=1/\ell_{({Y}_{j},{Y}_{j'})}\es(\sigma_i^{\eta},\sigma_j^{\eta}\di)$.
Moreover, it is straightforward that, for any $A\subseteq I$,
\begin{eqnarray}\nn
\begin{array}{rl}
\dst &P(\min_{j\in \mb{X}_{A}}\{F_j(X_j)\}>1-1/t)=
\exp\es\{-\ell_{_{\mb{Y}_{A}}}\es(\es(\frac{\sigma_{\alpha(A)}}{\ln(1/t)}\right)^{\eta},\hdots,
\left(\frac{\sigma_{\omega(A)}}{\ln (1/t)}\right)^{\eta}\di)\di\}\\\\
=& \exp\es\{-(-\ln (1/t))\ell_{_{\mb{Y}_{A}}}\es(\sigma_{\alpha(A)}^{\eta},\hdots,
\sigma_{\omega(A)}^{\eta}\di)\di\}
=t^{-\ell_{_{\mb{Y}_{A}}}\es(\sigma_{\alpha(A)}^{\eta},\hdots,
\sigma_{\omega(A)}^{\eta}\di)},
\end{array}
\end{eqnarray}
and so (\ref{condetaMs}) holds with $\kappa_{_{\mb{X}_{A}}}=1/\ell_{_{\mb{Y}_{A}}}\es(\sigma_{\alpha(A)}^{\eta},\hdots,
\sigma_{\omega(A)}^{\eta}\di)$. Therefore, by Proposition 2.9 in Ferreira and Ferreira (\cite{fer+fer12b} 2012), we have
\begin{eqnarray}\nn
\dst \kappa_{_{\mb{X}_{I_j},\mb{X}_{I_{j'}}}}=1/
\min\{\ell_{({Y}_{j},{Y}_{j'})}\es(\sigma_i^{\eta},\sigma_j^{\eta}\di):\, j\in I_j,\, j'\in I_{j'}\}\,.
\end{eqnarray}

Models for $\mb{X}_I=(X_1,\hdots,X_d)$ satisfying (\ref{LTcondMs}) can be derived from Section \ref{sex}, by considering in Examples \ref{ex1}-\ref{ex3} that $(F_1(X_1),\hdots,F_d(X_d))$ has survival copula $\overline{C}(u_1,\hdots,u_d)={C}_{\eta}(1-u_1,\hdots,1-u_d)$, with ${C}_{\eta}$ given in (\ref{coplog}).\\

In a future work we will apply the models and measures here developed in real data, by following a similar approach to that of Guillou \emph{et al.} (\cite{gui+14} 2014). More precisely, since $P(\max(X_1,\hdots,X_d)\leq t)=\exp(-\ell_{_{\mb{X}_I}}(\mb{1}_I)t^{-1/\eta})$, $\eta$ can be estimated as the tail index of an extreme value model, like the Generalized Probability Weighted Moment approach (Diebolt \emph{et al.} \cite{die+08} 2008) or use the maximum likelihood (ML) estimator. Condition (i) also allows to derive ML estimators for $\sigma_i$, $i=1,\hdots,d$, where $\eta$ can be replaced by the ML estimate. Based on $P(\bigcap_{i\in I_j}X_i/\sigma_i^{\eta}\leq t)=\exp(-\varepsilon_{_{\mb{X}_{I_j}}}(1)\,t^{-1/\eta})$, an ML estimator for $ \varepsilon_{_{\mb{X}_{I_j}}}(1)$ can be deduced, with $\sigma_i$ and $\eta$ replaced by the respective ML estimates. Similarly we obtain ML estimators for ${\varepsilon_{_{\mb{X}_{I_j}, \mb{X}_{I_{j'}}}}}(1,1)$ and $\varepsilon_{_{\mb{X}_{I_1},\hdots,\, \mb{X}_{I_p}}}(1,\hdots,1)$.

Relation (\ref{lem2.1}) also leads us to alternative estimators for $\varepsilon_{_{\mb{X}_{I_1},\hdots,\, \mb{X}_{I_p}}}(1,\hdots,1)$, ${\varepsilon_{_{\mb{X}_{I_j}, \mb{X}_{I_{j'}}}}}(1,1)$ and $ \varepsilon_{_{\mb{X}_{I_j}}}(1)$. This approach is developed in Ferreira and Ferreira (\cite{fer+fer12b} 2012). See also Fonseca \emph{et al.} (\cite{fon+15} 2015). More precisely, we can state
\begin{eqnarray}\nn
\widehat{\varepsilon}_{_{\mb{X}_{I_1},\hdots,\, \mb{X}_{I_p}}}(1,\hdots,1)
=\frac{1}{1-\overline{\bigvee_{j=1}^{p}\bigvee_{i\in I_j}\widehat{F}_i(X_i)}}-1,
\end{eqnarray}
where $\widehat{F}_i$ is an estimator of the marginal df $F_i$, e.g., the empirical df  and notation $\overline{W}$ corresponds to the sample mean based on independent copies $W^{(l)}$, $l=1,\hdots,n$, of $W$. Analogously, we derive estimators ${\widehat{\varepsilon}_{_{\mb{X}_{I_j}, \mb{X}_{I_{j'}}}}}(1,1)$ and $ \widehat{\varepsilon}_{_{\mb{X}_{I_j}}}(1)$. Asymptotic properties are addressed in the given references.

%

\end{document}